\begin{document}
\title{On the existence of $(H,A)$-stable sheaves on K3 or abelian surfaces}
\author{Markus Zowislok\footnote{Department of Mathematics, Imperial College London, 180 Queen's Gate, London SW7 2AZ, UK}}

\date{}
\maketitle

\newcommand{\IQ}{\ensuremath{\mathbb{Q}}}\newcommand{\IR}{\ensuremath{\mathbb{R}}}\newcommand{\IZ}{\ensuremath{\mathbb{Z}}}
\newcommand{\IP}{\ensuremath{\mathbb{P}}}\newcommand{\IC}{\ensuremath{\mathbb{C}}}\newcommand{\IN}{\ensuremath{\mathbb{N}}}
\newcommand{\cO}{\ensuremath{\mathcal{O}}} 
\newcommand{\bc}{\ensuremath{\mathrm{c}}}\newcommand{\bH}{\ensuremath{\mathrm{H}}}
\newcommand{\ch}{\ensuremath{\mathrm{ch}}}\newcommand{\td}{\ensuremath{\mathrm{td}}}
\newcommand{\ext}{\ensuremath{\mathrm{ext}}}\newcommand{\Ext}{\ensuremath{\mathrm{Ext}}} 
\renewcommand{\hom}{\ensuremath{\mathrm{hom}}}\newcommand{\Hom}{\ensuremath{\mathrm{Hom}}}
\newcommand{\rk}{\ensuremath{\mathrm{rk\,}}}\newcommand{\codim}{\ensuremath{\mathrm{codim}}}
\newcommand{\Coh}{\ensuremath{\mathrm{Coh}}}
\newcommand{\Pic}{\ensuremath{\mathrm{Pic}}}\newcommand{\Num}{\ensuremath{\mathrm{Num}}}
\newcommand{\CH}{\ensuremath{\mathrm{CH}}}\newcommand{\NS}{\ensuremath{\mathrm{NS}}}
\newcommand{\Amp}{\ensuremath{\mathrm{Amp}}}
\newcommand{\lel}{\ensuremath{\;(\le)\;}} \newcommand{\geg}{\ensuremath{\;(\ge)\;}} 
\newcommand{\qede}{\vspace{-1.3cm}\[\qedhere\]}
\newcommand{\qedel}{\vspace{-1.0cm}\[\qedhere\]}
\renewcommand{\labelenumi}{(\arabic{enumi})\ }\renewcommand{\labelenumii}{(\alph{enumii})\ }

\newtheorem{Prop}{Proposition}[section]
\newtheorem{Def}[Prop]{Definition}
\newtheorem{Cor}[Prop]{Corollary}
\newtheorem{Th}[Prop]{Theorem}
\newtheorem{Lemma}[Prop]{Lemma}

\selectlanguage{english}

\begin{abstract}
We give an existence result on $(H,A)$-stable sheaves on a K3 or abelian surface $X$ with primitive triple of invariants (rank,first Chern class,Euler characteristics) in the integral cohomology lattice.
Such a result yields the existence of singular projective $\IQ$-factorial symplectic terminalisations of certain moduli spaces of sheaves on $X$ that are Gieseker semistable with respect to a nongeneral ample divisor.
\end{abstract}

\section{Introduction}

After the paper \cite{KLS06} has appeared, the hope to construct new examples of irreducible (holomorphically) symplectic manifolds out of moduli spaces of sheaves on K3 or abelian surfaces almost died: the authors showed that in general, i.e.\ for general ample divisors, there is no symplectic resolution of these moduli spaces except for the nonsingular and O'Grady-like cases.
In \cite{Zow12} I investigated the case of a nongeneral ample divisor.
In particular, I could exclude the existence of new examples of projective irreducible symplectic manifolds lying birationally over components of the moduli spaces of one-dimensional semistable sheaves on K3 surfaces, and over components of many of the moduli spaces of two-dimensional sheaves on K3 surfaces, in particular, of those for rank two sheaves.

In order to answer the question of symplectic resolvability, as explained in \cite{Zow12}, constructing a projective $\IQ$-factorial symplectic terminalisation $\tilde M\to M$ of a component $M$ of the moduli space, i.e.\ a symplectic $\IQ$-factorial projective variety $\tilde M$ with at most terminal singularities together with a projective birational morphism $f\colon \tilde M\to M$, yields the following facts:
\begin{enumerate}
\item If $\tilde M$ can be chosen to be an irreducible symplectic manifold then $\tilde M$ is unique up to deformation by a result of Huybrechts \cite{Huy99}.
\item If $\tilde M$ is singular, $M$ admits no projective symplectic resolution by \cite[Corollary 1]{Nam06}.
\end{enumerate}

To be more precise we need some notation.
Let $X$ be a nonsingular projective irreducible surface over $\IC$, $K_X$ its canonical divisor, $H$ an ample divisor on $X$, and $E$ a coherent sheaf on $X$. We associate the element
$$u(E):=(\rk E,\bc_1(E),\chi(E))\in\Lambda(X):=\IN_0\oplus\NS(X)\oplus\IZ\subset H^{2*}(X,\IZ)$$ of sheaf invariants to $E$.
We avoid the elegant notion of a Mukai vector in favour of keeping torsion inside $\NS(X)$.
For an element $u:=(r,c,\chi)\in \Lambda(X)$ we define
\begin{eqnarray*}
\Delta(u)&:=&c^2 - 2r\chi+2r^2\chi(\cO_X)-rc.K_X\quad\textrm{and}\\
\chi(u,u)&:=&\chi(\cO_X) r^2-\Delta(u)\,.
\end{eqnarray*}
If $E$ satisfies $u(E)=u$, then, by Riemann-Roch, 
its discriminant\footnote{Be aware of different conventions of the discriminant's definition.} is $\Delta(u)$, and
$$\chi(E,E):=\sum_{k=0}^2 \ext^k(E,E) = \chi(u,u)\,,$$ where $\ext^k(E,E):=\dim \Ext^k(E,E)$. We will also write $\hom(E,F):=\dim\Hom(E,F)$ for two coherent sheaves $E,F$.
We denote the moduli space of sheaves $E$ on $X$ with $u(E)=u$ that are semistable with respect to an ample divisor $H$ on $X$ by $M_H(u)$ and the open subscheme of stable sheaves by $M^s_H(u)$. The corresponding spaces for $(H,A)$-(semi)stable sheaves introduced in \cite{Zow12} are denoted by $M_{H,A}(u)$ and $M^s_{H,A}(u)$.

The main result of \cite{Zow12} on the case of positive rank was the extension of the result of \cite{KLS06} to the following\\

{\noindent \bf Theorem \cite{Zow12} 1.1.\ \it
Let $X$ be a projective K3 or abelian surface, $u=(r,c,\chi)\in\Lambda(X)$ primitive with $r>0$ and $\chi(u,u)\ge 0$, $m\in\IN$ and $H$ an ample divisor on $X$, and assume that $M^s_H(mu)$ is nonempty. 
\begin{enumerate}
\item Let $m=1$ or $\chi(mu,mu)=8$.
Then there is a projective symplectic resolution
$M\to \overline{M^s_H(mu)}$.
If $H$ is not $mu$-general then $M$ can be chosen to be a symplectic resolution of $M_{H,A}(mu)$, where $A$ is an $mu$-general ample divisor.
\item Let $m\ge 2$ and $\chi(mu,mu)\neq 8$.
If $H$ is $mu$-general or $r=1$ or $\chi(u,u)>\varphi(r)$ with $\varphi$ as in \cite[Theorem 6.5]{Zow12}
then there is a singular $\IQ$-factorial projective symplectic terminalisation 
of $\overline{M^s_H(mu)}\,,$
and in particular, there is no projective symplectic resolution of $\overline{M^s_H(mu)}$.
\end{enumerate}
}

\noindent The proof of (2) is based on the existence of a singular $\IQ$-factorial projective symplectic terminalisation $M\to M_{H,A}(mu)$ established by item 2.b.ii of \cite[Theorem 5.3]{Zow12} using the existence of an $(H,A)$-stable sheaf $E$ with $u(E)=u$.
This existence is ensured by the assumption of (2), see the proof of the above theorem in \cite{Zow12}. Of course, instead one can also just assume this existence. Our main result of this article is another existence result, which in turn implies the existence of a singular $\IQ$-factorial projective symplectic terminalisation as in the above theorem:\\

{\noindent \bf Theorem \ref{neindep}.\ \it
Let $X$ be a projective surface with torsion canonical bundle, $u\in\Lambda(X)$ primitive, and $H$ and $A$ two ample divisors on $X$ such that $H$ is contained in at most one wall and $A$ is $u$-general.
Then the nonemptyness of $M_{H,A}(u)$ is independent of the choice of the pair $(H,A)$.
}\\

\noindent In particular, one has:\\

{\noindent \bf Corollary \ref{MHAne}.\ \it
Let $X$ be a projective K3 or abelian surface, $u\in\Lambda(X)$ primitive with $\chi(u,u)\ge -2$, and $H$ and $A$ two ample divisors on $X$ such that $H$ is contained in at most one wall and $A$ is $u$-general.
Then $M_{H,A}(u)$ is nonempty.
}\\

\noindent As $M_{H,A}(u)=M^s_{H,A}(u)$, in the situation of the corollary there is an $(H,A)$-stable sheaf $E$ with $u(E)=u$.

\vspace{1cm}
{\small \noindent \textit{Acknowledgements.}
The author would like to express his gratitude to Richard Thomas for his support and valuable discussions.
Moreover, he thanks the Imperial College London for its hospitality and the Deutsche Forschungsgemeinschaft (DFG) for supporting the stay there by a DFG research fellowship (Az.: ZO 324/1-1).}

\section{Twisted and $(H,A)$-stability}

In this section we recall three notions of stability of sheaves and establish a relation between twisted stability and $(H,A)$-stability for positive rank.
In my PhD thesis \cite{Zow10}, this relation was discussed in Chapter 6 for K3 surfaces.
We assume familiarity with the material presented in \cite{HL10} and use the notation therein.

Let still $X$ be a nonsingular projective irreducible surface over $\IC$. In this case, twisted stability and $(H,A)$-stability, which are two generalisations of Gieseker stability, have an overlap. We briefly recall the definitions. Therefore let $H$ be an ample divisor on $X$ and $E$ a nontrivial coherent sheaf on $X$.

\begin{enumerate}
\item \textit{Gieseker stability, see e.g.\ in \cite[Section 1.2]{HL10}.} 
The Hilbert polynomial of $E$ is $P_H(E)(n):=\chi(E\otimes \cO_X(nH))$. Its leading coefficient multiplied by $(\dim E)!$ is called multiplicity of $E$ and denoted here by $\alpha^H(E)$. It is always positive, and $$p_H(E)(n):=\frac{\chi(E\otimes \cO_X(nH))}{\alpha^H(E)}$$ is called reduced Hilbert polynomial of $E$.
$E$ is said to be $H$-(semi)stable if $E$ is pure and for all nontrivial proper subsheaves $F\subset E$ one has that $p_H(F) \lel p_H(E)$, i.e.\ one has $p_H(F)(n) \lel p_H(E)(n)$ for $n\gg 0$. 

In order to avoid case differentiation for stable and semistable sheaves we here follow the Notation 1.2.5 in \cite{HL10} using bracketed inequality signs, e.g.\ an inequality with $\lel$ for (semi)stable sheaves means that one has $\le$ for semistable sheaves and $<$ for stable sheaves.

If $\rk E>0$, then $E$ is $H$-(semi)stable if $E$ is pure and for all nontrivial proper subsheaves $F\subset E$ one has that $\mu_H(F) \le \mu_H(E)$ and, in the case of equality, $\frac{\chi(F)}{\rk F} \lel \frac{\chi(E)}{\rk E}$.
Here $\mu_H(E):=\frac{\bc_1(E).H}{\rk E}$ is the slope of $E$ (with respect to $H$).

\item \textit{Twisted stability.}
Let $D\in\NS(X)_\IQ:=\NS(X)\otimes\IQ$.
We call $$\chi^D(E):=\int_X\ch(E).\exp(D).\td(X)$$ the $D$-twisted Euler characteristic of $E$, and
we say that $E$ is $D$-twisted $H$-(semi)stable if $E$ is pure and for all nontrivial saturated proper subsheaves $F\subset E$ one has that $$\frac{\chi^{D+nH}(F)}{\alpha^H(F)}\lel \frac{\chi^{D+nH}(E)}{\alpha^H(E)}$$
as polynomials in $n$.

If $\rk E>0$, then $E$ is $D$-twisted $H$-(semi)stable if $E$ is pure and for all nontrivial proper subsheaves $F\subset E$ one has that $\mu_H(F) \le \mu_H(E)$ and, in the case of equality, $$\mu_D(F)+\frac{\chi(F)}{\rk F} \lel \mu_D(E)+\frac{\chi(E)}{\rk E}\,.$$

\item \textit{$(H,A)$-stability as defined in \cite[Definition 7.1]{Zow12}.}
We only give an equivalent definition for sheaves of positive rank on a surface.
Let $A$ be another ample divisor on $X$ and assume that $\rk E>0$.
Then $E$ is $(H,A)$-(semi)stable if it is $H$-semistable and if for any proper nontrivial subsheaf $F\subset E$ with reduced Hilbert polynomial $p_H(F)=p_H(E)$ one has that $\mu_A(F) \geg \mu_A(E)$, i.e.\ stable corresponds to $>$ and semistable to $\ge$.

It is enough to restrict to saturated proper nontrivial subsheaves $F\subset E$ in the definition.
\end{enumerate}
The case of Gieseker stability can be regained by $D=0$ from twisted stability and by $H=A$ from $(H,A)$-stability.\\

We briefly recall the notion of a general ample divisor for positive rank.
The ample cone of $X$ carries a chamber structure for a given triple $u=(r,c,\chi)\in\Lambda(X)$ of invariants.
The definition depends on $r$. In the case of $r=1$ we agree that the whole ample cone is the only chamber.
For $r>1$, we follow the definition in \cite[Section 4.C]{HL10}.
Let $\Num(X):=\Pic(X)/\equiv$, where $\equiv$ denotes numerical equivalence, and $\Delta:=\Delta(u)>0$.
\begin{Def}
Let
$$W(r,\Delta):=\{ \xi^\perp \cap \Amp(X)_{\IQ} \;|\; \xi\in\Num(X) \quad\mathrm{with}\quad -\frac {r^2}4 \Delta \le \xi^2 < 0 \}\,,$$
whose elements are called $u$-walls.
The connected components of the complement of the union of all $u$-walls are called $u$-chambers.
An ample divisor is called $u$-general if it is not contained in a $u$-wall.
\end{Def}
\noindent The set $W(r,\Delta)$ is locally finite in $\Amp(X)_\IQ$ by \cite[Lemma 4.C.2]{HL10}.\\

Let still $r>0$, $H$ an ample divisor lying on exactly one $u$-wall $W$ and $A$ a $u$-general ample divisor lying in a chamber touching $H$.

\begin{Def}
For a nontrivial saturated subsheaf $F\subset E$ of a $\mu_H$-semistable sheaf $E$ with $u(E)=u$, $\mu_H(F)=\mu_H(E)$, and $$\frac{\bc_1(F)}{\rk F}\not\equiv\frac{\bc_1(E)}{\rk E}\,,$$ we call the hyperplane
$$\left\{z\in \NS(X)_\IQ \;\Big|\; \frac{\chi^z(F)}{\rk F}=\frac{\chi^z(E)}{\rk E}  \right\}$$
a $u$-miniwall.
The connected components of the complement of all $u$-miniwalls are called $u$-minichambers.\footnote{Both notions are inspired by the work of Ellingsrud and G\"ottsche.}
\end{Def}
In the following we omit the $u$-prefix as it is fixed for the whole section.

\begin{Prop}
The number of miniwalls is finite and the miniwalls are parallel to $W$.
For $D,D'\in \NS(X)_\IQ$ one has that the set of $D$-twisted $H$-semistable sheaves is the same as the set of $D'$-twisted $H$-semistable sheaves
if and only if $D$ and $D'$ belong to the same $v$-minichamber or $v$-miniwall.
\end{Prop}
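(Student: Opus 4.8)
The plan is to first linearise the twisted (semi)stability condition in the twist variable. For a saturated subsheaf $F\subset E$ with $s:=\rk F$ and $t:=\rk(E/F)=r-s$, a Riemann--Roch expansion of $\chi^z$ gives
\begin{equation*}
\frac{\chi^z(F)}{\rk F}-\frac{\chi^z(E)}{\rk E}=\left(\frac{\bc_1(F)}{\rk F}-\frac{\bc_1(E)}{\rk E}\right).z+\left(\frac{\chi(F)}{\rk F}-\frac{\chi(E)}{\rk E}\right),
\end{equation*}
the crucial point being that the purely quadratic term $\tfrac{z^2}2-\tfrac{z.K_X}2$ appears with coefficient one in both reduced quantities, independently of the sheaf, and hence cancels. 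Thus the miniwall attached to $(F,E)$ is the \emph{affine} hyperplane with integral normal $\zeta:=r\bc_1(F)-s\bc_1(E)\in\Num(X)$, namely $\{z\mid \zeta.z=s\chi(E)-r\chi(F)\}$, and twisted semistability of $E$ reads, for each relevant $F$, as one linear inequality $\zeta.D\lel s\chi(E)-r\chi(F)$.

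Next I would establish parallelism and pin down the admissible normals. Since $F$ is saturated of the same $\mu_H$-slope as the $\mu_H$-semistable sheaf $E$, both $F$ and $E/F$ are $\mu_H$-semistable, so Bogomolov's inequality gives $\Delta(F)\ge 0$ and $\Delta(E/F)\ge 0$; combined with $\zeta.H=0$ and the Hodge index theorem this yields $\zeta\neq 0$ and $-\tfrac{r^2}4\Delta\le\zeta^2<0$, exactly the bound of \cite[Section 4.C]{HL10}. Hence $\zeta^\perp\cap\Amp(X)_\IQ$ is a $u$-wall through $H$; as $H$ lies on the unique wall $W$, every such $\zeta$ is proportional to a fixed primitive class defining $W$, which is precisely the assertion that the miniwalls are parallel to $W$. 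Finiteness then follows in two steps: the bound $-\tfrac{r^2}4\Delta\le\zeta^2<0$ leaves only finitely many integral $\zeta$ in this fixed direction, and for each $\zeta$ and each $s\in\{1,\dots,r-1\}$ the class $\bc_1(F)=\tfrac1r(\zeta+s\bc_1(E))$ is determined, whereupon $\Delta(F)\ge 0$ and $\Delta(E/F)\ge 0$ bound the integer $\chi(F)$ from above and below. Thus only finitely many constants $s\chi(E)-r\chi(F)$ occur, giving finitely many miniwalls.

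For the equivalence, the forward implication is immediate from parallelism: since all miniwalls share the normal direction of $W$, on any fixed minichamber or on any fixed miniwall each linear form $\zeta.z-(s\chi(E)-r\chi(F))$ has constant sign --- constantly zero along a miniwall that coincides with the one attached to $(F,E)$, and of constant nonzero sign otherwise --- so every defining inequality keeps the same truth value and the set of $D$-twisted $H$-semistable sheaves is unchanged. The converse is the substantial point: I must show that crossing, or landing on, a miniwall genuinely alters this set. Given a miniwall separating $D$ and $D'$, I would take the pair $(F,E)$ defining it and produce a sheaf realising the jump --- replacing $F$ and $E/F$ by $\mu_H$-stable sheaves of the same invariants and forming a suitable generic extension $0\to F\to G\to E/F\to 0$ --- so that $G$ is twisted semistable on one side of the miniwall but destabilised by $F$ on the other.

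The main obstacle I anticipate is exactly this last step. The cancellation, the parallelism, and the two boundedness estimates are essentially forced once one invokes Bogomolov's inequality and the local finiteness of $W(r,\Delta)$; but the reverse implication requires the \emph{existence} of sheaves witnessing each miniwall. Controlling the extension so that $G$ carries no competing subsheaf of equal reduced Hilbert polynomial across the wall, and ensuring that the $\mu_H$-stable building blocks with the prescribed invariants exist, is where the real work lies; here I would lean on the standard wall-crossing constructions in the style of Ellingsrud and G\"ottsche together with the boundedness established above.
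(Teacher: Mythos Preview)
The paper's proof is a one-line citation to \cite[Proposition 3.5]{MW97}, so there is no in-paper argument to compare against; you are in effect reconstructing Matsuki--Wentworth. The parts you spell out --- the linearisation of $\chi^z$, parallelism via $\zeta.H=0$ together with the Hodge index theorem and the single-wall hypothesis on $H$, finiteness via Bogomolov on the $\mu_H$-semistable pieces $F$ and $E/F$ to bound first $\zeta^2$ and then $\chi(F)$, and the forward direction of the equivalence --- are all correct and are the standard moves.

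You have also correctly located the only genuine obstacle, namely the reverse implication. The definition of a miniwall already supplies an actual pair $(F,E)$, so the numerical data is realised for free; what is not free is a sheaf whose twisted semistability \emph{actually changes} across that miniwall, since $E$ itself may well be destabilised on both sides by unrelated subsheaves. Your proposed remedy --- replace $F$ and $E/F$ by $\mu_H$-stable sheaves with the same invariants and form a generic extension --- is the right strategy and is essentially what \cite{MW97} carries out, but it is not a formality: one needs existence of those $\mu_H$-stable building blocks and enough control over the extension to exclude competing destabilising subsheaves. Until that construction is filled in, your argument remains incomplete exactly where you anticipate.
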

\begin{proof}
\cite[Proposition 3.5]{MW97}.
\end{proof}

\begin{Lemma}
Let $D$ be contained in a minichamber and $E$ a $D$-twisted $H$-semistable sheaf with $u(E)=u$. Then for every nontrivial saturated subsheaf $F\subset E$ with $$\frac{\chi^{D+nH}(F)}{\rk F}= \frac{\chi^{D+nH}(E)}{\rk E}$$ (as polynomials in $n$) one has that $$\frac{\bc_1(F)}{\rk F}\equiv\frac{\bc_1(E)}{\rk E}\quad\textit{and}\quad \frac{\chi(F)}{\rk F}=\frac{\chi(E)}{\rk E}\,.$$
\end{Lemma}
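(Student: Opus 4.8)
The plan is to expand the $D$-twisted Euler characteristic $\chi^{D+nH}(\cdot)$ into a polynomial in $n$ and compare its coefficients for $E$ and for $F$. Writing $\td(X)=1+t_1+t_2$ with $t_2=\chi(\cO_X)$, a direct Riemann--Roch computation on the surface gives, for any coherent sheaf $G$ of positive rank,
\begin{equation*}
\frac{\chi^{D+nH}(G)}{\rk G}=\frac{H^2}{2}\,n^2+\Bigl(D.H+H.t_1+\mu_H(G)\Bigr)n+\frac{\chi^{D}(G)}{\rk G},
\qquad
\frac{\chi^{D}(G)}{\rk G}=\frac{\chi(G)}{\rk G}+\frac{\bc_1(G).D}{\rk G}+D.t_1+\frac{D^2}{2}.
\end{equation*}
Applying the first formula to $E$ and to $F$ and comparing the two polynomials coefficientwise, the $n^2$-coefficients agree automatically, while equality of the $n^1$-coefficients yields $\mu_H(F)=\mu_H(E)$.

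The heart of the matter is the constant term. First I would show, by contradiction, that $\tfrac{\bc_1(F)}{\rk F}\equiv\tfrac{\bc_1(E)}{\rk E}$. Suppose not. Since $E$ is $D$-twisted $H$-semistable it is in particular $\mu_H$-semistable, so $F$ is a nontrivial saturated subsheaf of a $\mu_H$-semistable sheaf with $u(E)=u$ satisfying $\mu_H(F)=\mu_H(E)$ and $\tfrac{\bc_1(F)}{\rk F}\not\equiv\tfrac{\bc_1(E)}{\rk E}$. By the Definition preceding the statement, $F$ therefore defines a miniwall
\begin{equation*}
M_F=\Bigl\{z\in\NS(X)_\IQ \;\Big|\; \frac{\chi^{z}(F)}{\rk F}=\frac{\chi^{z}(E)}{\rk E}\Bigr\},
\end{equation*}
and since the two sides of the defining equation differ only in the linear term $\bigl(\tfrac{\bc_1(F)}{\rk F}-\tfrac{\bc_1(E)}{\rk E}\bigr).z$, whose coefficient is nonzero in $\Num(X)_\IQ$, this $M_F$ is an honest affine hyperplane. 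But evaluating the hypothesised polynomial identity at $n=0$ gives $\tfrac{\chi^{D}(F)}{\rk F}=\tfrac{\chi^{D}(E)}{\rk E}$, i.e.\ $D\in M_F$. This contradicts the assumption that $D$ lies in a minichamber, i.e.\ off every miniwall. Hence $\tfrac{\bc_1(F)}{\rk F}\equiv\tfrac{\bc_1(E)}{\rk E}$.

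Finally I would feed this numerical equivalence back into the constant term via the second formula above. Because intersection numbers factor through numerical equivalence, $\tfrac{\bc_1(F).D}{\rk F}=\tfrac{\bc_1(E).D}{\rk E}$, while the summands $D.t_1+\tfrac{D^2}{2}$ are common to $E$ and $F$; so the equality $\tfrac{\chi^{D}(F)}{\rk F}=\tfrac{\chi^{D}(E)}{\rk E}$ obtained at $n=0$ collapses to $\tfrac{\chi(F)}{\rk F}=\tfrac{\chi(E)}{\rk E}$, which is the second assertion.

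I do not expect a serious obstacle: the argument is essentially bookkeeping of the twisted Riemann--Roch polynomial, and it needs neither the Bogomolov-type wall bound nor the parallelism of miniwalls with $W$. The one point requiring care is to confirm that the vanishing locus $M_F$ is genuinely one of the miniwalls listed in the Definition, so that the minichamber hypothesis on $D$ can be invoked; this is exactly why one first extracts $\mu_H(F)=\mu_H(E)$ from the linear coefficient and checks that $M_F$ is a proper hyperplane before deriving the contradiction.
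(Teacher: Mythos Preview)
Your proposal is correct and follows essentially the same route as the paper's proof: compare coefficients of the polynomial in $n$ to obtain $\mu_H(F)=\mu_H(E)$ and $\chi^D(F)/\rk F=\chi^D(E)/\rk E$, invoke the minichamber hypothesis to force $\bc_1(F)/\rk F\equiv\bc_1(E)/\rk E$, and then read off $\chi(F)/\rk F=\chi(E)/\rk E$. The paper's version is simply terser, leaving the Riemann--Roch expansion and the contradiction with the miniwall definition implicit.
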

\begin{proof}
Let $F\subset E$ be such a nontrivial saturated subsheaf. Equating the coefficients of the above polynomials yields $\mu_H(F)=\mu_H(E)$ and $$\frac{\chi^D(F)}{\rk F}=\frac{\chi^D(E)}{\rk E}\,.$$
As $D$ is not contained in a miniwall, one has that $\frac{\bc_1(F)}{\rk F}\equiv\frac{\bc_1(E)}{\rk E}$
and thus also $\frac{\chi(F)}{\rk F}=\frac{\chi(E)}{\rk E}\,.$
\end{proof}

\begin{Lemma}\label{tStab}
Let $L$ be in a minichamber $C$, $L'$ in its closure $\overline C$, and $E$ a coherent sheaf on $X$ with $u(E)=u$.
\begin{enumerate}
\item If $E$ is $L$-twisted  $H$-semistable then it is also $L'$-twisted $H$-semistable.
\item If $E$ is $L'$-twisted $H$-stable then it is also $L$-twisted $H$-stable.
\end{enumerate}
\end{Lemma}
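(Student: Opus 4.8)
The plan is to package the whole twist-dependence of the refined stability comparison into one affine-linear function on $\NS(X)_\IQ$ and then to read off the two implications from the position of $C$ relative to the miniwalls; throughout we use the positive-rank description of twisted stability from the standing assumptions. For a nontrivial proper saturated subsheaf $F\subset E$ with $\mu_H(F)=\mu_H(E)$, put
$$\delta_F(z):=\frac{\chi^z(F)}{\rk F}-\frac{\chi^z(E)}{\rk E}\qquad(z\in\NS(X)_\IQ).$$
Expanding $\chi^z(F)=\int_X\ch(F)\exp(z)\td(X)$ with $\exp(z)=1+z+\tfrac12 z^2$ and $\td(X)=1-\tfrac12 K_X+\chi(\cO_X)$, the terms $\tfrac12 z^2$ and $-\tfrac12 z.K_X$ do not depend on the sheaf once divided by the rank, so they cancel in $\delta_F$. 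Hence $\delta_F$ is affine-linear in $z$, with linear part $\frac{\bc_1(F).z}{\rk F}-\frac{\bc_1(E).z}{\rk E}$ and constant term $\delta_F(0)=\frac{\chi(F)}{\rk F}-\frac{\chi(E)}{\rk E}$. In particular $\delta_F$ is constant exactly when $\frac{\bc_1(F)}{\rk F}\equiv\frac{\bc_1(E)}{\rk E}$, and otherwise $\{\delta_F=0\}$ is precisely the miniwall attached to $F$. Comparing with the definition, $\delta_F(z)$ is the difference of $\frac{\bc_1(F).z}{\rk F}+\frac{\chi(F)}{\rk F}$ and the corresponding quantity for $E$, so $E$ with $u(E)=u$ is $z$-twisted $H$-(semi)stable if and only if $E$ is $\mu_H$-semistable -- a condition not involving $z$ -- and $\delta_F(z)\lel 0$ for every such $F$.

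The second ingredient concerns only the chamber structure. Since $C$ is a connected component of the complement of the finitely many miniwalls, it meets no miniwall; thus for every $F$ of miniwall type the affine function $\delta_F$ is nowhere zero on $C$, so $C$ lies in one of the two open half-spaces bounded by $\{\delta_F=0\}$ and $\delta_F$ has a fixed strict sign there. By continuity $\delta_F$ then has the corresponding weak sign on $\overline C$. For $F$ of constant type, $\delta_F$ takes one and the same value on all of $\overline C$.

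Both assertions now follow by reading off signs, using that $\mu_H$-semistability is twist-independent. For (1), let $E$ be $L$-twisted $H$-semistable with $L\in C$, so $\delta_F(L)\le 0$ for all relevant $F$. If $F$ is of miniwall type then $L\in C$ forces $\delta_F(L)\neq 0$, hence $\delta_F(L)<0$, so $C$ lies in the negative half-space and $\delta_F\le 0$ on $\overline C\ni L'$; if $F$ is of constant type then $\delta_F(L')=\delta_F(L)\le 0$. In either case $\delta_F(L')\le 0$, and $E$ is $L'$-twisted $H$-semistable. For (2), let $E$ be $L'$-twisted $H$-stable with $L'\in\overline C$, so $\delta_F(L')<0$ for all relevant $F$. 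If $F$ is of miniwall type, a strict negative value at the point $L'\in\overline C$ rules out $C$ lying in the positive half-space, so $C$ lies in the negative half-space and $\delta_F(L)<0$; if $F$ is of constant type then $\delta_F(L)=\delta_F(L')<0$. Thus $\delta_F(L)<0$ for all relevant $F$, and $E$ is $L$-twisted $H$-stable.

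The one point requiring care is the asymmetry between the two directions: the non-strict semistability inequalities extend from $C$ to $\overline C$ by plain continuity, whereas the strict stability inequalities must be pushed from a boundary point of $\overline C$ back into the open chamber $C$. It is exactly the affine-linearity of $\delta_F$ -- forcing $C$ to sit in a single open half-space of each miniwall -- that legitimises this backward propagation, since then a strict sign at any point of $\overline C$ already determines the half-space containing $C$.
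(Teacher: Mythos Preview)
Your proof is correct and follows essentially the same approach as the paper: your affine function $\delta_F$ is exactly the paper's map $f$, and both arguments reduce to the observation that this function is constant when $\frac{\bc_1(F)}{\rk F}\equiv\frac{\bc_1(E)}{\rk E}$ and otherwise has a fixed nonzero sign on the minichamber $C$, which then propagates to $\overline C$ (for (1)) or back from $\overline C$ into $C$ (for (2)). The only differences are cosmetic---you spell out the Riemann--Roch expansion of $\chi^z$ and phrase the sign-propagation in (2) via half-spaces, whereas the paper argues that $f(L')<0$ forces $f<0$ on a neighbourhood of $L'$ meeting $C$.
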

\begin{proof}
Let $F\subset E$ be a nontrivial saturated proper subsheaf.
As for $\mu_H(F)<\mu_H(E)$ one has $$\frac{\chi^{nH+D}(F)}{\rk F}<\frac{\chi^{nH+D}(E)}{\rk E}$$ (as polynomials in $n$) for any $D\in \NS(X)_\IQ$, we can restrict to $\mu_H(F)=\mu_H(E)$. We define the map
$$f\colon \overline C\to \IQ, D\mapsto \left(\frac{\bc_1(F)}{\rk F}-\frac{\bc_1(E)}{\rk E}\right).D+\frac{\chi(F)}{\rk F}-\frac{\chi(E)}{\rk E}\,.$$
If $\frac{\bc_1(F)}{\rk F}\equiv\frac{\bc_1(E)}{\rk E}$ then $f$ is independent of $D$. So let $\frac{\bc_1(F)}{\rk F}\not\equiv\frac{\bc_1(E)}{\rk E}$. Then $f\neq 0$ on the whole minichamber $C$ by the definition of a minichamber.
We distinguish the two cases from above.
\begin{enumerate}
\item Let $E$ be $L$-twisted $H$-semistable. Then $f<0$ on $C$, hence $f\le 0$.
\item Let $E$ be $L'$-twisted $H$-stable. Then $f(L')<0$, hence $f<0$ on an open subset containing $L'$, which in turn yields $f<0$ on $C$.\qedhere
\end{enumerate}
\end{proof}

\begin{Prop}\label{tSttAHSt}
Let $L$ be in a minichamber $C$, $L'$ in its boundary $\partial C$, and $E$ a coherent sheaf on $X$ with $u(E)=u$.
The vector space generated by the wall $W$ divides $\NS(X)_\IQ$ into two open half spaces, one of them containing $L-L'$.
Choose $A$ in the neighbouring chamber of $W$ contained in the other half space.
Then $E$ is $L$-twisted $H$-(semi)stable if and only if it is $L'$-twisted $H$-semistable and for all nontrivial saturated proper subsheaves $F\subset E$ with $$\frac{\chi^{nH+L'}(F)}{\rk F}= \frac{\chi^{nH+L'}(E)}{\rk E}$$ (as polynomials in $n$) one has that $\mu_A(F)\geg\mu_A(E)$.
\end{Prop}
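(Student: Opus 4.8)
The plan is to encode both twisted stability conditions through a single affine function of the twisting parameter and then to trade its sign at $L$ for a slope inequality with respect to $A$. For a saturated proper nontrivial subsheaf $F\subset E$ put $g_F:=\frac{\bc_1(F)}{\rk F}-\frac{\bc_1(E)}{\rk E}\in\NS(X)_\IQ$ and let $f_D(F):=\frac{\chi^D(F)}{\rk F}-\frac{\chi^D(E)}{\rk E}$ be the function from the proof of Lemma \ref{tStab}, which is affine in $D$ with linear part $D\mapsto g_F.D$. Expanding $\chi^{nH+L'}$ by Riemann--Roch shows that the polynomial identity $\frac{\chi^{nH+L'}(F)}{\rk F}=\frac{\chi^{nH+L'}(E)}{\rk E}$ amounts to $\mu_H(F)=\mu_H(E)$ together with $f_{L'}(F)=0$; thus the subsheaves singled out in the statement are exactly those with $\mu_H(F)=\mu_H(E)$ lying on the miniwall through $L'$. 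Since every $F$ with $\mu_H(F)<\mu_H(E)$ already satisfies the strict polynomial inequality for every twist, only subsheaves with $\mu_H(F)=\mu_H(E)$ are relevant, and for those $E$ is $D$-twisted $H$-(semi)stable exactly when $f_D(F)\lel 0$ for all of them.

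The heart of the argument is the sign analysis on the miniwall through $L'$. If $g_F\equiv 0$ then $f_D(F)$ is constant in $D$ and $\mu_A(F)=\mu_A(E)$, so such an $F$ contributes identically for $L$ and $L'$ and will be controlled directly by $L'$-semistability. If $g_F\not\equiv 0$, then by the preceding Proposition the miniwall of $F$ is parallel to $W=\xi^\perp\cap\Amp(X)_\IQ$, so $g_F=\lambda_F\,\xi$ for some $\lambda_F\in\IQ\setminus\{0\}$. For such an $F$ on the miniwall through $L'$, i.e.\ with $f_{L'}(F)=0$, using that $f_\bullet(F)$ is affine with linear part $g_F$ I would then compute
\[ f_L(F)=g_F.(L-L')=\lambda_F\,\xi.(L-L'),\qquad \mu_A(F)-\mu_A(E)=g_F.A=\lambda_F\,\xi.A. \]
Because $A$ is chosen $u$-general and in the half space of $\langle W\rangle=\xi^\perp$ opposite to the one containing $L-L'$, the numbers $\xi.(L-L')$ and $\xi.A$ are nonzero of opposite sign; hence $f_L(F)$ and $\mu_A(F)-\mu_A(E)$ are nonzero of opposite sign, which yields the pointwise equivalence $f_L(F)\lel 0\Leftrightarrow\mu_A(F)\geg\mu_A(E)$. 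The nonvanishing of $\xi.A$, guaranteed by $u$-generality of $A$, is exactly what makes the semistable and stable brackets fall into line, since it forces $\mu_A(F)\neq\mu_A(E)$.

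It remains to assemble the two directions for each bracket. For the forward implication I would obtain $L'$-semistability for free from Lemma \ref{tStab}(1) and then read off $\mu_A(F)\geg\mu_A(E)$ on the miniwall through $L'$ from $f_L(F)\lel 0$ via the equivalence of the previous paragraph; here the subsheaves with $g_F\equiv 0$ never lie on that miniwall in the stable case (their $f$ would vanish, contradicting stability), so the strict bracket is respected. For the converse I would verify $f_L(F)\lel 0$ for every relevant $F$ with $\mu_H(F)=\mu_H(E)$ by distinguishing: subsheaves off the miniwall through $L'$, where $L'$-semistability gives $f_{L'}(F)<0$ and the constancy of the sign of $f_\bullet(F)$ on the connected minichamber $C$ (the mechanism in the proof of Lemma \ref{tStab}) propagates this to $f_L(F)<0$; subsheaves on that miniwall with $g_F\not\equiv 0$, handled by the sign equivalence; and subsheaves with $g_F\equiv 0$, handled by $L'$-semistability together with $\mu_A(F)=\mu_A(E)$.

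I expect the main obstacle to be the bookkeeping in this last step rather than any single computation: for each $F$ with $\mu_H(F)=\mu_H(E)$ one must keep track of whether $L'$ lies on the miniwall of $F$ and on which side of it the minichamber $C$ sits, and then check that in the degenerate cases $g_F\equiv 0$ and $f_{L'}(F)\neq 0$ the stable/semistable brackets on the two sides of the asserted equivalence stay synchronised. The two geometric inputs---parallelism of the miniwalls to $W$ and the placement of $A$ in the opposite half space---are what collapse all of this to the single clean sign computation above.
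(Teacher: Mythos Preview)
Your argument is correct and follows essentially the same route as the paper: reduce to $\mu_H(F)=\mu_H(E)$, use the affine identity $f_L(F)-f_{L'}(F)=g_F.(L-L')$, split into $g_F\equiv 0$ versus $g_F\not\equiv 0$, and in the latter case exploit that $g_F$ is proportional to the normal of $W$ so that $g_F.(L-L')$ and $g_F.A=\mu_A(F)-\mu_A(E)$ are nonzero of opposite sign. The only cosmetic difference is that you make the proportionality $g_F=\lambda_F\xi$ explicit via the parallelism of miniwalls (Proposition~2.3), whereas the paper phrases it as ``$g_F$ defines the wall $W$'' and reads off the sign relation directly.
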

\begin{proof}
Let $F\subset E$ be a nontrivial saturated proper subsheaf. 
As for $\mu_H(F)<\mu_H(E)$ one has $$\frac{\chi^{nH+D}(F)}{\rk F}<\frac{\chi^{nH+D}(E)}{\rk E}$$ (as polynomials in $n$) for any $D\in \NS(X)_\IQ$, we can again restrict to $\mu_H(F)=\mu_H(E)$. Then
\begin{eqnarray}
\left(\frac{\chi^{nH+L}(F)}{\rk F}-\frac{\chi^{nH+L}(E)}{\rk E}\right)-\left(\frac{\chi^{nH+L'}(F)}{\rk F}-\frac{\chi^{nH+L'}(E)}{\rk E}\right)=\left(\frac{\bc_1(F)}{\rk F}-\frac{\bc_1(E)}{\rk E}\right).(L-L')\,.\label{rtHPDD}
\end{eqnarray}
If $\frac{\bc_1(E)}{\rk E}\equiv\frac{\bc_1(F)}{\rk F}$ then 
$$\frac{\chi^{nH+L}(F)}{\rk F}-\frac{\chi^{nH+L}(E)}{\rk E}=\frac{\chi^{nH+L'}(F)}{\rk F}-\frac{\chi^{nH+L'}(E)}{\rk E}$$
and $\mu_A(F)=\mu_A(E)$,
so we assume $$\frac{\bc_1(F)}{\rk F}-\frac{\bc_1(E)}{\rk E}\not\equiv 0\,,$$
which thus defines the wall $W$. In particular, the sign of 
$$\left(\frac{\bc_1(F)}{\rk F}-\frac{\bc_1(E)}{\rk E}\right).(L-L')\neq 0$$
is opposite to the sign of $\mu_A(F)-\mu_A(E)$ due to the choice of $A$.
\begin{enumerate}
\item
Assume that $E$ is $L$-twisted $H$-semistable and thus also $L'$-twisted $H$-semistable by Lemma \ref{tStab}.
If furthermore $$\frac{\chi^{nH+L'}(F)}{\rk F}=\frac{\chi^{nH+L'}(E)}{\rk E}$$ then equation (\ref{rtHPDD}) yields
$$\frac{\chi^{nH+L}(F)}{\rk F}-\frac{\chi^{nH+L}(E)}{\rk E}=\left(\frac{\bc_1(F)}{\rk F}-\frac{\bc_1(E)}{\rk E}\right).(L-L')\,,$$
which is negative, hence $\mu_A(F)>\mu_A(E)$.
\item 
Assume that $E$ is $L'$-twisted $H$-semistable, i.e.\ in particular $$\frac{\chi^{nH+L'}(F)}{\rk F}\le\frac{\chi^{nH+L'}(E)}{\rk E}\,.$$
If one has strict inequality then by the same argument as in Lemma \ref{tStab} one has that $$\frac{\chi^{nH+L}(F)}{\rk F}<\frac{\chi^{nH+L}(E)}{\rk E}\,.$$
So let's assume equality. Then $\mu_A(F)\ge\mu_A(E)$
and thus
$$\frac{\chi^{nH+L}(F)}{\rk F}-\frac{\chi^{nH+L}(E)}{\rk E}=\left(\frac{\bc_1(F)}{\rk F}-\frac{\bc_1(E)}{\rk E}\right).(L-L')<0\,.$$
\end{enumerate} \qedel
\end{proof}

\noindent The following statement, at least the part on semistability, is already known to Matsuki and Wentworth, as it can be found in \cite[Theorem 4.1, part i]{MW97}.

\begin{Cor}\label{tStAHSt}
Let $A$ be an ample divisor in a chamber touching $H$ and $L\in\Pic(X)$ lying on a miniwall.
The vector space generated by the wall $W$ divides $\NS(X)_\IQ$ into two open half spaces, one of them containing $A$.
Choose $D$ in one of the minichambers touching $L$ such that $D-L$ is in the other half space.
Then a coherent sheaf $E$ with $u(E)=u$ is $D$-twisted $H$-(semi)stable if and only if $E\otimes L$ is $(H,A)$-(semi)stable.
\end{Cor}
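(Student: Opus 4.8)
The plan is to deduce the corollary from Proposition \ref{tSttAHSt} by setting up a dictionary between $L$-twisted $H$-stability of $E$ and ordinary Gieseker $H$-stability of the twisted sheaf $E\otimes L$, and then reading off the residual $\mu_A$-condition. First I would record the effect of the twist on the relevant invariants. Since $L$ is a line bundle, $\ch(E\otimes L)=\ch(E)\exp(\bc_1(L))$, so that
$$P_H(E\otimes L)(n)=\chi^{nH}(E\otimes L)=\chi^{nH+L}(E)$$
for every $n$, writing $L$ for $\bc_1(L)$. As tensoring by a line bundle leaves the rank unchanged, one has $\alpha^H(E\otimes L)=\alpha^H(E)$, and the assignment $F\mapsto F\otimes L$ is an inclusion-preserving bijection between the saturated proper subsheaves of $E$ and those of $E\otimes L$, with $\alpha^H(F\otimes L)=\alpha^H(F)$ and $P_H(F\otimes L)(n)=\chi^{nH+L}(F)$. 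Comparing this with the definitions of Gieseker and twisted stability recalled above shows at once that $E$ is $L$-twisted $H$-(semi)stable if and only if $E\otimes L$ is Gieseker $H$-(semi)stable; in particular $E\otimes L$ is $H$-semistable if and only if $E$ is $L$-twisted $H$-semistable.

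Next I would translate the slope condition. For positive rank one has $\mu_A(F\otimes L)=\mu_A(F)+\bc_1(L).A$, and likewise for $E$, so the shift by $\bc_1(L).A$ cancels and $\mu_A(F\otimes L)\geg\mu_A(E\otimes L)$ is equivalent to $\mu_A(F)\geg\mu_A(E)$. Moreover, using $\alpha^H(F)=\rk F\cdot H^2$, the equality $p_H(F\otimes L)=p_H(E\otimes L)$ of reduced Hilbert polynomials becomes
$$\frac{\chi^{nH+L}(F)}{\rk F}=\frac{\chi^{nH+L}(E)}{\rk E}\,,$$
which is exactly the distinguished condition appearing in Proposition \ref{tSttAHSt}. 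Since it suffices to test $(H,A)$-stability on saturated subsheaves, the definition of $(H,A)$-(semi)stability then reads: $E\otimes L$ is $(H,A)$-(semi)stable if and only if $E$ is $L$-twisted $H$-semistable and for every nontrivial saturated proper subsheaf $F\subset E$ with $\frac{\chi^{nH+L}(F)}{\rk F}=\frac{\chi^{nH+L}(E)}{\rk E}$ one has $\mu_A(F)\geg\mu_A(E)$.

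Finally I would invoke Proposition \ref{tSttAHSt} with $D$ playing the role of its ``$L$'' and $L$ that of its ``$L'$''. By hypothesis $D$ lies in a minichamber $C$ touching $L$, so $L\in\partial C$, and $D-L$ lies in the half space opposite to the one containing $A$, which is precisely the side convention under which the proposition is stated; hence its equivalence identifies the right-hand condition of the previous paragraph with $E$ being $D$-twisted $H$-(semi)stable. Chaining the two equivalences gives the claim, the (semi)stable brackets matching on both sides. The only point demanding care is this bookkeeping of half spaces and signs: one must verify that the side condition imposed on $A$ and $D$ here is exactly the one making the sign of $\bigl(\frac{\bc_1(F)}{\rk F}-\frac{\bc_1(E)}{\rk E}\bigr).(D-L)$ opposite to that of $\mu_A(F)-\mu_A(E)$ in the proof of Proposition \ref{tSttAHSt}, and that the $u$-generality of $A$ ensures these $\mu_A$-inequalities are strict whenever $\frac{\bc_1(F)}{\rk F}\not\equiv\frac{\bc_1(E)}{\rk E}$, so that the stable case of the equivalence indeed comes out strict.
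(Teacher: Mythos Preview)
Your proof is correct and follows exactly the paper's approach: the paper's proof is the two-line observation that $E$ is $L$-twisted $H$-(semi)stable if and only if $E\otimes L$ is $H$-(semi)stable, after which the claim follows from Proposition~\ref{tSttAHSt} and the definition of $(H,A)$-stability. You have simply written out in full the dictionary (twist on Euler characteristics, invariance of $\alpha^H$, cancellation of the $\bc_1(L).A$ shift in $\mu_A$, and the translation of the reduced Hilbert polynomial equality) that the paper summarises by the word ``Clearly''.
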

\begin{proof}
Clearly a coherent sheaf $E$ is $L$-twisted $H$-(semi)stable if and only if $E\otimes L$ is $H$-(semi)stable.
Thus the claim follows from Proposition \ref{tSttAHSt} and the description of $(H,A)$-stability at the beginning of this section.
\end{proof}

\section{Existence of $(H,A)$-stable sheaves}

\begin{Th}\label{neindep}
Let $X$ be a projective surface with torsion canonical bundle, $u\in\Lambda(X)$ primitive, and $H$ and $A$ two ample divisors on $X$ such that $H$ is contained in at most one wall and $A$ is $u$-general.
Then the nonemptyness of $M_{H,A}(u)$ is independent of the choice of the pair $(H,A)$.
\end{Th}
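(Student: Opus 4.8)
The plan is to reduce the statement about $(H,A)$-stability to one about twisted stability, where the chamber/wall machinery of Matsuki--Wentworth and the preceding lemmas give good control, and then to connect different choices of the pair $(H,A)$ by a deformation argument exploiting the torsion canonical bundle. First I would invoke Corollary \ref{tStAHSt}: given the pair $(H,A)$ with $H$ lying on (at most) one wall $W$ and $A$ a $u$-general divisor in a chamber touching $H$, the nonemptyness of $M_{H,A}(u)$ is equivalent to the existence of a $D$-twisted $H$-(semi)stable sheaf with the given invariants, for an appropriate $L\in\Pic(X)$ on a miniwall and $D$ in an adjacent minichamber chosen so that $D-L$ lies in the half space opposite to $A$. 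Since $M_{H,A}(u)=M^s_{H,A}(u)$ under the primitivity hypothesis, semistable equals stable here, so the twisted-stable and twisted-semistable loci coincide and I need only track existence of twisted semistable sheaves.

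Next I would show that the dependence on $(H,A)$ is really only a dependence on a finite combinatorial datum. By the Definition of $u$-walls and the local finiteness of $W(r,\Delta)$, and by the Proposition stating that there are finitely many miniwalls all parallel to $W$, the set of twisted-semistable sheaves changes only when $D$ crosses a miniwall, and Lemma \ref{tStab} shows exactly how the semistable locus behaves as $D$ moves from a minichamber into its closure and back out into a neighbouring minichamber: semistability is preserved passing to the boundary, and stability propagates from the boundary back into the open chamber. The key point I would extract is that for a \emph{primitive} $u$ there are no strictly semistable twisted sheaves, so on each miniwall the semistable set equals the stable set of either adjacent minichamber; hence the existence of a twisted semistable sheaf is genuinely constant as $D$ ranges over all minichambers and miniwalls associated to a fixed $H$. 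This handles variation of $A$ (equivalently, of the minichamber containing $D$) for fixed $H$.

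The remaining and harder task is to compare different choices of $H$ itself. Here I would use that $H$ lies on at most one wall, so any two admissible $H,H'$ can be joined by a path in $\Amp(X)_\IQ$ crossing walls one at a time, and at each wall-crossing the relevant moduli functor is governed by the same twisted-stability picture on either side. The essential input is a deformation/openness argument: twisted $H$-semistability is an open condition in flat families, and the existence of a semistable sheaf with fixed primitive invariants deforms across a single wall because the expected dimension $\chi(u,u)$ of the deformation space is controlled by $\Delta(u)$ independently of the polarisation. The torsion canonical bundle hypothesis is precisely what I expect to be the crux: it forces $K_X.\bc_1=0$ and makes $\chi(u,u)$ and the symmetry of $\Ext$-groups behave as in the K3/abelian case, so that obstruction spaces are Serre-dual to $\Hom$-spaces and no sheaf becomes obstructed merely by crossing a wall. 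The main obstacle will be verifying that nonemptyness truly survives each individual wall-crossing without assuming nonemptiness a priori on both sides; I would address this by constructing, from a semistable sheaf on one side of a wall, a sheaf semistable on the other side via elementary modifications along the destabilising subsheaf, controlling the invariants with $\Delta(u)$ and using the boundary/interior mechanism of Lemma \ref{tStab} to land back in the stable locus.
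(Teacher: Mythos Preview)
Your first step---reducing $(H,A)$-(semi)stability to $D$-twisted $H$-(semi)stability via Corollary~\ref{tStAHSt}---is exactly what the paper does. After that, however, the paper's proof is a single citation: it invokes \cite[Proposition~4.1]{Yos03}, which already establishes that nonemptyness of the moduli space of $D$-twisted $H$-stable sheaves with invariant $u$ is independent of the choice of a $u$-general pair $(H,D)$ on a surface with torsion canonical bundle. Everything you sketch from the second paragraph onward is an attempt to reprove Yoshioka's result by hand.

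There is also a genuine gap in your version. Your argument for varying $A$ with $H$ fixed rests on the claim that for primitive $u$ ``there are no strictly semistable twisted sheaves, so on each miniwall the semistable set equals the stable set of either adjacent minichamber.'' Primitivity rules out strictly semistable sheaves only when $D$ lies in a \emph{minichamber}; on a miniwall one can have strictly $L'$-twisted semistable sheaves even for primitive $u$---that is precisely why miniwalls exist. Lemma~\ref{tStab} only gives inclusions of the form $\mathrm{stable}(L')\subseteq\mathrm{semistable}(L)\subseteq\mathrm{semistable}(L')$ for $L$ in an adjacent minichamber, and this does not let you transport nonemptyness from one minichamber across the miniwall to the other without further input. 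Your $H$-variation sketch likewise has the right flavour (and the torsion canonical bundle is indeed the crucial hypothesis), but it remains a programme rather than a proof; filling in the elementary-modification and deformation details is nontrivial and is exactly what \cite{Yos03} supplies.
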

\begin{proof}
As a direct consequence of \cite[Proposition 4.1]{Yos03}, the nonemptyness of the moduli space $M_H^D(u)$ of $D$-twisted $H$-stable sheaves is independent of the choice of the pair $(H,D)$ if $(H,D)$ is $u$-general, where $D$ is any $\IQ$-line bundle. 
The claim now follows from Corollary \ref{tStAHSt}.
\end{proof}

\noindent Hence it is enough to prove nonemptyness for one suitable special choice of ample divisors. In particular, one has the

\begin{Cor}\label{MHAne}
Let $X$ be a projective K3 or abelian surface, $u\in\Lambda(X)$ primitive with $\chi(u,u)\ge -2$, and $H$ and $A$ two ample divisors on $X$ such that $H$ is contained in at most one wall and $A$ is $u$-general.
Then $M_{H,A}(u)$ is nonempty.
\end{Cor}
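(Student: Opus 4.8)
The plan is to use Theorem \ref{neindep} to reduce the assertion to a single, maximally convenient choice of polarisations, and then to invoke the classical nonemptiness theorem for moduli of Gieseker-stable sheaves on K3 and abelian surfaces. First I would observe that $u$-general ample divisors exist at all: the set $W(\rk u,\Delta(u))$ of $u$-walls is locally finite in $\Amp(X)_\IQ$, so its complement is dense and we may fix a $u$-general ample divisor $H_0$. The pair $(H_0,H_0)$ is then admissible in the sense of Theorem \ref{neindep}, since a $u$-general divisor lies on no wall (in particular on at most one) and the second entry $A=H_0$ is $u$-general. By Theorem \ref{neindep}, the nonemptiness of $M_{H,A}(u)$ for the pair in the statement of the corollary is therefore equivalent to the nonemptiness of $M_{H_0,H_0}(u)$.

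Second, I would collapse $M_{H_0,H_0}(u)$ to an ordinary moduli space. Inspecting the definition of $(H,A)$-stability with $A=H=H_0$: the supplementary inequality $\mu_A(F)\geg\mu_A(E)$ is only tested on proper nontrivial subsheaves $F\subset E$ with $p_{H_0}(F)=p_{H_0}(E)$, and for such $F$ the leading coefficients of the reduced Hilbert polynomials already agree, so $\mu_{H_0}(F)=\mu_{H_0}(E)$. Hence $(H_0,H_0)$-semistability coincides with $H_0$-semistability and $(H_0,H_0)$-stability with $H_0$-stability, i.e. $M_{H_0,H_0}(u)=M_{H_0}(u)$. Since $u$ is primitive and $H_0$ is $u$-general, every $H_0$-semistable sheaf with invariants $u$ is $H_0$-stable, so $M_{H_0}(u)=M^s_{H_0}(u)$. (Equivalently, one may run this reduction through Corollary \ref{tStAHSt}, trading $(H_0,A)$-stability for untwisted, i.e. $0$-twisted, $H_0$-stability.)

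The substantive input is then the following: on a projective K3 or abelian surface, for a primitive $u\in\Lambda(X)$ with $\chi(u,u)\ge -2$ and a $u$-general polarisation $H_0$, the moduli space $M^s_{H_0}(u)$ of Gieseker-stable sheaves is nonempty. This is the existence theorem of Mukai, Kuleshov and Yoshioka (the relevant nonemptiness statement being exactly the one underlying the references used for Theorem \ref{neindep}). Combining this with the two reduction steps yields $M_{H,A}(u)\neq\emptyset$ for the original pair.

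I expect the only genuine difficulty to lie in this last step: the classical existence statement is a deep result, and some care is needed both at the boundary of the numerical range, where the moduli space has smallest dimension, and in matching the K3 and the abelian case under the single hypothesis $\chi(u,u)\ge -2$. Everything preceding it — the local finiteness of the wall structure, the admissibility of the pair $(H_0,H_0)$, and the collapse of $(H,A)$-stability to Gieseker stability when $A=H$ — is purely formal and follows directly from the definitions together with Theorem \ref{neindep}.
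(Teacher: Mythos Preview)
Your argument is correct and follows essentially the same route as the paper: reduce via Theorem \ref{neindep} to the pair $(H_0,H_0)$ for a $u$-general $H_0$, identify $M_{H_0,H_0}(u)$ with $M_{H_0}(u)$, and invoke the classical nonemptiness result for primitive $u$ with $\chi(u,u)\ge -2$ (the paper cites \cite{KLS06} for this). Your write-up simply spells out in more detail the formal steps (existence of a $u$-general divisor, the collapse of $(H,H)$-stability to $H$-stability) that the paper takes for granted.
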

\begin{proof}
This follows from the above Theorem \ref{neindep} as $M_H(u)=M_{H,H}(u)$ is well-known to be nonempty for general $H$ and $\chi(u,u)\ge -2$, see e.g.\ \cite{KLS06}.
\end{proof}

\bibliographystyle{amsalpha}
\bibliography{my}

\end{document}